%
%
%
%
\documentclass{amsart}

\usepackage{amsfonts}       
\usepackage{amsthm}
\usepackage{amssymb}
\usepackage{graphicx}

\newtheorem{theorem}{Theorem}[section]
\newtheorem{lemma}[theorem]{Lemma}
\newtheorem{corollary}[theorem]{Corollary}

\theoremstyle{definition}

\theoremstyle{remark}
\newtheorem{remark}[theorem]{Remark}

\numberwithin{equation}{section}



\newcommand{\R}{\mathbb{R}}
\newcommand{\g}{\mathfrak{g}}
\newcommand{\grad}{\operatorname{grad}}

\begin{document}

\title{Hardy inequalities and uncertainty principles in the presence of a black hole}

\author{M. Paschalis}
\address{Department of Mathematics, National and Kapodistrian University of Athens}

\email{mpaschal@math.uoa.gr}


\date{\today}

\dedicatory{Preprint Version}

\keywords{Schwarzschild, black hole, Hardy inequality, uncertainty principle}

\begin{abstract}
In this paper we establish Hardy and Heisenberg uncertainty-type inequalities for the exterior of a Schwarzschild black hole. The weights that appear in both inequalities are tailored to fit the geometry, and can both be compared to the related Riemannian distance from the event horizon to yield inequalities for that distance. Moreover, in both cases the classic Euclidean inequalities with a point singularity can be recovered in the limit where one stands ``far enough'' from the black hole, as expected from the asymptotic flatness of the metric.
\end{abstract}

\maketitle

\tableofcontents

\section{Introduction}
One of the crowning achievements of Einstein's theory of general relativity was without doubt the prediction of spacetime singularities and black holes. The first category refers (quite obviously) to spacetime metrics that exhibit some kind of singularity, while the later one refers to the existence of an event horizon whose interior is causally inaccessible to external observers. The simplest, and most well known such example is the Schwarzschild metric
\begin{equation}
-\frac{r-1}{r} dt \otimes dt +\frac{r}{r-1} dr \otimes dr + r^2 g_{S^{2}},
\end{equation}
in $\R^{3+1}$ where $t\in \R$ and $r=|x|>0$. This metric is obtained by solving Einstein's equations in vacuum under the assumption of spherical symmetry, in which case a point mass $m$ exists in the origin ($2m=1$ in our simplified, dimensionless units). A straighforward generalisation is the $(n+1)$-dimensional analogue 
\begin{equation}
-\frac{r^{n-2}-1}{r^{n-2}} dt \otimes dt +\frac{r^{n-2}}{r^{n-2}-1} dr \otimes dr + r^2 g_{S^{n-1}},
\end{equation}
which is a solution to Einstein's equations in higher dimensions and possesses similar properties.

Black holes have been proven to be ubiquitous in the universe at the macroscopic level, recently even so by direct observation. However the existence of black holes of small mass is theoretically possible as well - these are called micro-black holes in the context of physics. This warrants the study of semi-classical, low energy quantum problems, i.e. the study of Schr\"odinger's equation with a background Schwarzschild geometry, as well as Heisenberg's uncertainty principle in that context. For more background on general relativity one may consult Carroll \cite{Carroll}, and for higher-dimensional black holes Horowitz \cite{Horowitz} and references therein (and of course this is just a selection from a vast literature).

The classic $L^2$ Hardy inequality for a point singularity on $\mathbb{R}^n$, $n\geq 3$, reads 
\begin{equation}
\int_{\mathbb{R}^n} |\nabla \psi(x)|^2 dx \geq \Big( \frac{n-2}{2} \Big)^2 \int_{\mathbb{R}^n} \frac{|\psi(x)|^2}{|x|^2} dx
\end{equation}
and holds true for all $\psi \in C^1_c(\mathbb{R}^n)$. It has important applications in the theory of Schr\"odinger operators involving singular potentials, and for this reason it is of great theoretical as well as applied interest, as it guarantees the non-negativity of the spectrum of the related Schr\"odinger operator 
\begin{equation}
-\Delta - \frac{\mu}{|x|^2}
\end{equation}
provided that $\mu\leq (n-2)^2/4$. The stability of the hydrogen atom is a well known consequence of this fact, to mention only one application. Note that the exponent $2$ in the denominator as well as the constant $(n-2)^2/4$ are both critical, which means that they cannot be improved (although this does not exclude the possibility of additional improving terms).

In the last few decades there has been a surge of research regarding the study of Hardy inequalities in different contexts. Another version of the Hardy inequality involves the distance $d$ from the boundary of a domain $\Omega$ rather than a single point. In particular, for convex domains it is well known that
\begin{equation}
\int_\Omega |\nabla \psi(x)|^2 dx \geq \frac{1}{4} \int_\Omega \frac{|\psi(x)|^2}{d^2(x)} dx,
\end{equation}
and again both the exponent $2$ and the constant $1/4$ are the best possible in the sense described above. For a general background on Hardy inequalities see for instance \cite{A,AM,BFT,D,H,MMP} and references therein, as well as the recent book \cite{BEL}.

There have also been developments regarding Hardy inequalities in various settings beyond the Euclidean, including the hyperbolic and spherical setting. Some notable examples are \cite{Ca,BGG,DD,KO,SP}, although the list is far from exhaustive.

In the present work, we aim to establish Hardy type inequalities in the context where the classical Euclidean metric is replaced by the Schwarzschild metric. This requires a careful analysis of the underlying geometry in order to determine a critical potential which is comparable with the relevant Riemannian distance function. In the same spirit we establish a Heisenberg uncertainty-type inequality with a similar, although different, weight function. Both results reduce to their Euclidean analogues when the distance from the event horizon is large, which is to be expected due to asymptotic flatness. It should be also noted that both results are restricted to the exterior of the black hole, as extending them to the interior would be essentially meaningless both physically and mathematically, due to considerations that will be outlined in the next section.

The Hardy inequality which we study in detail here was first derived by the author in \cite{P} in the case $n=3$, as a demonstration of the very general technique developed in that paper that allows to establish Hardy potentials in a wide geometric context, where it was left as an open problem to conduct a more thorough analysis on this specific case. Here we offer a different, analytic proof, in which we enlarge the class of admissible functions. The Heisenberg inequality is entirely novel.

\section{Schwarzschild geometry}

Since the essence of the problem lies in its geometry, it is important to first develop an adequate understanding of the geometry itself. We will concern ourselves only with Schwarzschild black holes, since they are the simplest both physically and mathematically.

The classic Schwarzschild spacetime featuring a black hole of radius $r_S=1$ in $(3+1)$-dimensional spacetime is defined as $\mathbb{R} \times \mathbb{R}^3\setminus\{0\}$, equipped with the Lorentzian metric 
\begin{align*}
-\frac{r-1}{r} dt \otimes dt +\frac{r}{r-1} dr \otimes dr + r^2 g_{S^{2}},
\end{align*}
as it is expressed when choosing polar coordinates for the spatial part. This is a direct solution of Einstein's equations in the case where no charge or angular momentum is assumed.

For the sake of depth and generality, we will actually take things a step further and consider the $(n+1)$-dimensional analogue of this which emerges as a solution of Einstein's equations in higher dimensions. Assuming spatial dimension $n \geq 3$, these generalised Schwarzschild spacetimes are likewise defined as $\mathbb{R} \times \mathbb{R}^n\setminus\{0\}$, carrying the metric 
\begin{align*}
-\frac{r^{n-2}-1}{r^{n-2}} dt \otimes dt +\frac{r^{n-2}}{r^{n-2}-1} dr \otimes dr + r^2 g_{S^{n-1}}.
\end{align*}
The metric seemingly contains two singularities, one for $r=0$ and one for $r=1$ (i.e. $\R\times S^{n-1}$). However, it is well known that only $r=0$ corresponds to a true singularity, while the case $r=1$ is a coordinate artifact that can be eliminated by passing to different coordinates. Nevertheless, is is true that a so called \textit{event horizon} is formed at $r=1$, which makes the interior causally inaccessible to the exterior, and this is one of the main reasons why our semi-classical model will have to be restricted to the exterior region $\R\times \mathcal{E}$ where $\mathcal{E}:= \{ x\in \R^n: |x|>1\}$. (In fact, the Schwarzschild metric is such that $dt$ corresponds to the proper time element of a stationary observer standing far enough from the black hole, and it is from this point of view that our analysis is carried.)

We would like to study static, time-independent operators, so we will from now on drop the negative definite part of the metric, thus restricting our attention to spacelike submanifolds where $t$ is constant. Such an approach is meaningful, at least in principle, since a wave function defined on a Cauchy surface of a static spacetime (as is the case with the exterior of Schwarzschild) maintains the classical probabilistic one-particle interpretation. The spacelike submanifolds are all isometric to the Riemannian manifold $(\mathcal{E},\g)$, where $\mathcal{E} = \{ x \in \mathbb{R}^n : |x|>1 \}$ as before and 
\begin{equation}
\g=\frac{r^{n-2}}{r^{n-2}-1} dr \otimes dr + r^2 g_{S^{n-1}}
\end{equation}
is the $n$-dimensional reduced Schwarzschild metric. Note that these would no longer be spacelike (or Cauchy) if extended to include any part of the interior region.

The Riemannian gradient of a differentiable function $f:\mathcal{E} \rightarrow \mathbb{R}$ in this case reads 
\begin{equation}
\grad_\g f = \frac{r^{n-2}-1}{r^{n-2}} \frac{\partial f}{\partial r} \partial_r + \frac{1}{r^2} \grad_{S^{n-1}}f,
\end{equation}
and the associated volume form in polar coordinates is given by 
\begin{equation}
\omega_\g = r^{n-1} \sqrt{\frac{r^{n-2}}{r^{n-2}-1}} dr \wedge \omega_{S^{n-1}},
\end{equation}
so, in particular, the Riemannian measure $v_\g$ is related to the standard Lebesgue measure $dx$ by 
\begin{equation}
dv_\g = \sqrt{\frac{r^{n-2}}{r^{n-2}-1}}dx.
\end{equation}
The subscript $\g$ will be dropped from now when the context is clear.

The Riemannian distance from the event horizon is uniquely determined as the radial function $d:\mathcal{E} \rightarrow (0,\infty)$ such that $|\grad_\g d|=1$ and $\lim_{|x| \rightarrow 1} d(x)=0$. This function is given by 
\begin{equation}
d = \int_1^r \sqrt{\frac{\xi^{n-2}}{\xi^{n-2}-1}} d\xi.
\end{equation}
The integral is convergent for all $n \geq 3$ and can be calculated explicitly in the lowest dimensions $n=3,4$. In particular, we have 
\begin{align*}
&d=\sqrt{r} \sqrt{r-1} + \log (\sqrt{r}+\sqrt{r-1})\quad \textrm{for } n=3, \\
&d=\sqrt{r^2-1}\quad \textrm{for } n=4.
\end{align*}
For higher dimensions, an expression can always be obtained via series expansion, however this is impractical for our purposes so we spare the details. Note, also, that $d$ is a decreasing function of $n$. In Figure 1 we give the plot of $d$ for the first three dimensions of interest. In addition, by elementary calculations it easy to show that for $r$ close to $1$ we have the approximation 
\begin{align*}
d = \frac{2}{\sqrt{n-2}} \sqrt{r-1} +o(\sqrt{r-1}) \quad \text{as } r\rightarrow 1,
\end{align*}
and it is obvious that $d = r+o(r)$ for large $r$.

\begin{figure}
\centering
\includegraphics[scale=0.8]{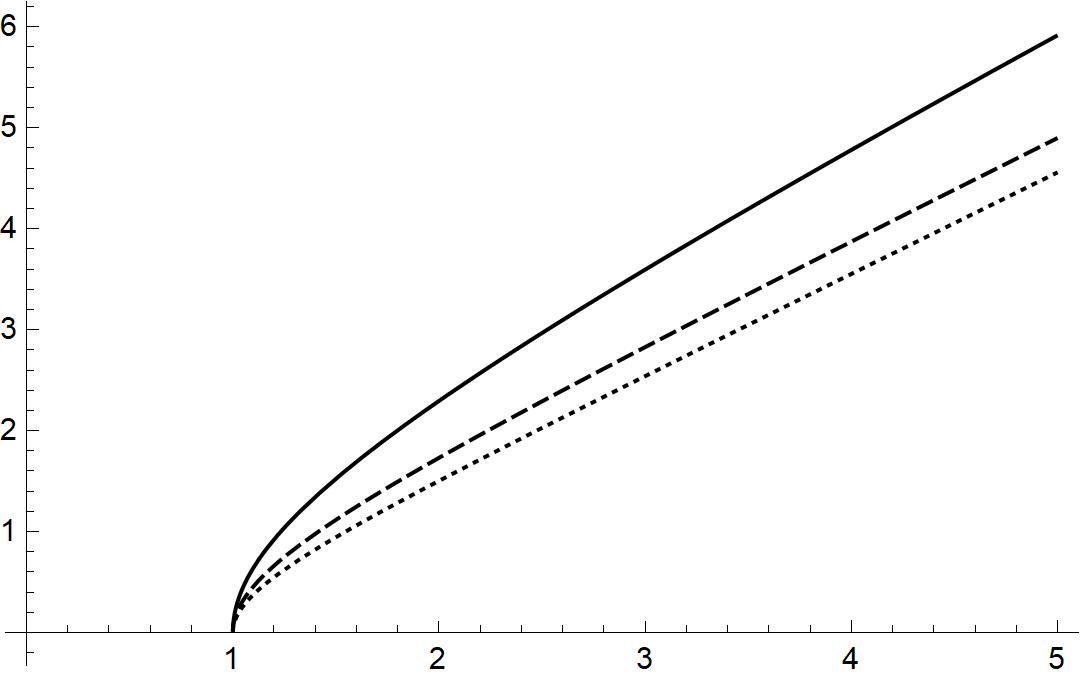}
\caption{The Riemannian distance $d$ of a point from the event horizon as a function of $r$ for dimensions $n=3,4,5$.}
\end{figure}

\section{Hardy inequality}

Our goal is to establish a critical potential $V$ that is singular on the event horizon such that the inequality 
\begin{equation}
\int_\mathcal{E} |\grad \psi|^2 dv \geq \int_\mathcal{E} V |\psi|^2 dv
\end{equation}
holds for a set of admissible functions $\psi$ containing $C^1_c(\mathcal{E})$. To have a better comparison of this potential to the one that appears in the standard Hardy inequality, we choose to express it in the form $V= \kappa / \delta^2$ where $\delta$ is an appropriate ``distance'' and $\kappa$ is a positive constant.

For a radial function $\psi = \psi (r)$, such an inequality essentially takes the form 
\begin{equation}\label{1-dim}
\int_1^\infty |\psi'(r)|^2 r^{n-1} \sqrt{\frac{r^{n-2}-1}{r^{n-2}}}dr \geq \kappa \int_1^\infty \frac{|\psi(r)|^2}{\delta^2(r)} r^{n-1} \sqrt{\frac{r^{n-2}}{r^{n-2}-1}}dr.
\end{equation}
Note that once we specify a critical potential for the radial case, the same potential will also be critical for the general case as well, due to the spherical symmetry of the problem.

In what follows, we require the following lemma.

\begin{lemma}
The general solution of the differential equation 
\begin{equation}\label{ODE1}
f'(x) - \frac{n-1}{x} f(x) = \pm \sqrt{\frac{x^{n-2}}{x^{n-2}-1}}
\end{equation}
on $(1,\infty)$ is given by 
\begin{equation}
f(x) = \frac{2}{n-2} \bigg(\lambda \pm \sqrt{\frac{x^{n-2}-1}{x^{n-2}}} \bigg) x^{n-1}
\end{equation}
for each sign, respectively.
\end{lemma}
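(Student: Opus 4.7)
The plan is to treat the ODE as a first-order linear equation and solve it with an integrating factor, relegating the explicit verification to a small substitution.

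First I would rewrite the equation in the standard form $f'-(n-1)x^{-1}f = \pm\sqrt{x^{n-2}/(x^{n-2}-1)}$ and note that its homogeneous part has the obvious solution $x^{n-1}$, suggesting the integrating factor $\mu(x)=x^{-(n-1)}$. Multiplying through yields
\[
\frac{d}{dx}\bigl(x^{-(n-1)}f(x)\bigr) = \pm\,\frac{x^{-n/2}}{\sqrt{x^{n-2}-1}},
\]
so the whole problem reduces to computing a single antiderivative on $(1,\infty)$.

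Next I would evaluate the right-hand side by the substitution $u=\sqrt{1-x^{-(n-2)}}$, for which $2u\,du=(n-2)x^{-(n-1)}dx$. A short computation shows that $x^{-n/2}(x^{n-2}-1)^{-1/2}\,dx = \tfrac{2}{n-2}\,du$, and therefore
\[
\int \frac{x^{-n/2}}{\sqrt{x^{n-2}-1}}\,dx = \frac{2}{n-2}\sqrt{\frac{x^{n-2}-1}{x^{n-2}}} + C.
\]
Integrating the equation above and multiplying back by $x^{n-1}$ produces exactly the announced formula, with the free constant of integration renamed to $\lambda$ after absorbing the factor $\tfrac{2}{n-2}$.

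Since the ODE is linear and everything is $C^1$ on $(1,\infty)$, standard existence-uniqueness guarantees that this one-parameter family exhausts all solutions, so no further argument is needed. The only mildly delicate step is spotting the correct substitution; but once one observes that the quantity $\sqrt{(x^{n-2}-1)/x^{n-2}}$ is precisely what appears in the gradient weight of the Schwarzschild metric, the choice of $u$ is natural, and this is really the main (and essentially only) obstacle in the argument.
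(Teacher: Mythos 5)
Your proof is correct, and it takes a different route from the paper's. The paper simply verifies by direct substitution that the displayed family solves the equation, and then invokes the standard existence--uniqueness theorem for first-order linear ODEs with $C^1$ coefficients on $(1,\infty)$ to conclude that this one-parameter family exhausts all solutions. You instead derive the formula constructively: the integrating factor $x^{-(n-1)}$ reduces the problem to the antiderivative $\int x^{-n/2}(x^{n-2}-1)^{-1/2}\,dx$, which your substitution $u=\sqrt{1-x^{-(n-2)}}$ evaluates correctly as $\tfrac{2}{n-2}\sqrt{(x^{n-2}-1)/x^{n-2}}+C$; I checked the algebra and it is right. Your approach buys an explanation of where the formula comes from (and, as a bonus, the integrating-factor computation already shows the family is exhaustive, since multiplication by a nonvanishing factor is reversible --- your final appeal to existence--uniqueness is therefore slightly redundant rather than necessary). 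The paper's approach buys brevity: once the candidate solution is written down, differentiating it is a one-line check. Both are complete proofs of the lemma.
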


\begin{proof}
It is straightforward to verify that these are indeed solutions. Other than that, for each choice of the sign, \eqref{ODE1} is an ordinary differential equation with $C^1$ coefficients in $(1,\infty)$, so its solution is unique up to the parameter $\lambda$ that is subject to initial conditions.
\end{proof}

The reason why we are concerned with this specific ODE is because the critical potential for the black hole is given in terms of its solutions. In particular, we will prove a Hardy inequality for the function 

\begin{equation}
\delta(r)= \left\{
\begin{array}{ll}
      2r^{n-1}\sqrt{\frac{r^{n-2}-1}{r^{n-2}}} & 1<r<(4/3)^{\frac{1}{n-2}} \\
      2r^{n-1}\Big( 1-\sqrt{\frac{r^{n-2}-1}{r^{n-2}}} \Big) & r \geq (4/3)^{\frac{1}{n-2}}\\
\end{array} 
\right. .
\end{equation}

The point $R=(4/3)^{\frac{1}{n-2}}$ is the point where the two branches meet, so the function is continuous everywhere and smooth a.e. (with the single exception of $r=R$, where the derivative is discontinuous). In Figure 2 we give a plot of $\delta$ for the first three dimensions of interest. There are also some additional properties to consider. The function $f(r)=(n-2)^{-1} \delta (r)$ consists of two branches that are solutions to the ODE \eqref{ODE1} for each sign, respectively. The choice $\lambda = 0$ on the first branch was necessary to ensure that $\delta(1)=0$. The choice $\lambda=1$ on the second branch is also necessary if we require the linear asymptotic behaviour $\delta(r) = r +o(r)$ for large $r$. The second requirement is not strictly necessary, but since the Schwarzschild metric is asymptotically flat, we would like to consider potentials that are asymptotically comparable to the Euclidean one.

\begin{figure}
\centering
\includegraphics[scale=0.8]{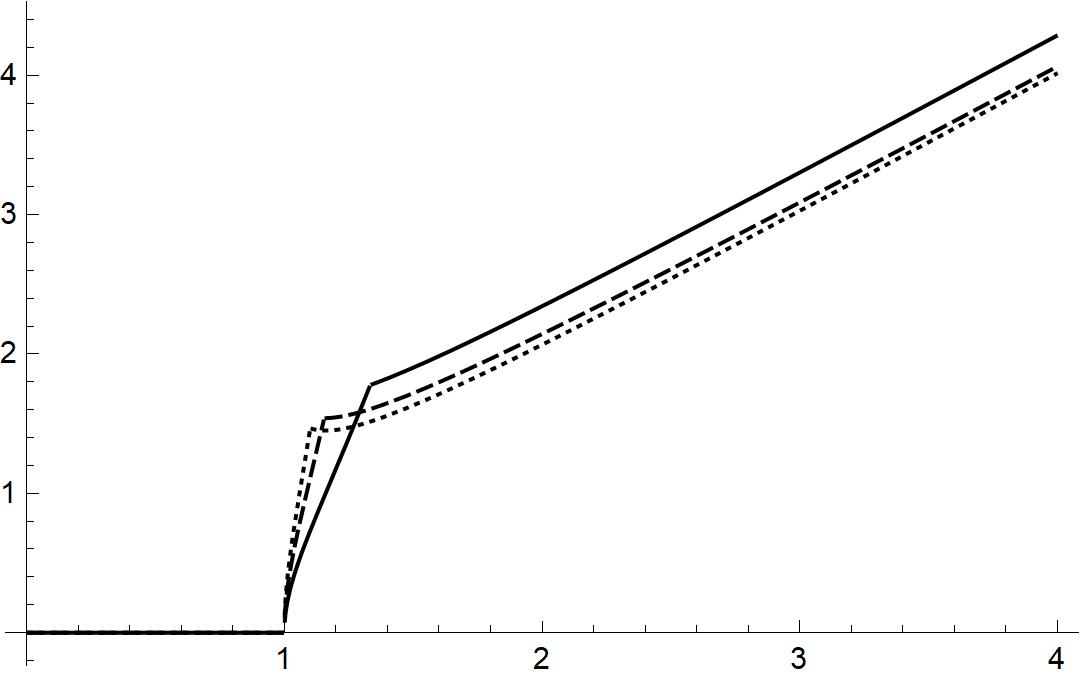}
\caption{The critical distance $\delta$ as a function of $r$ for dimensions $n=3,4,5$.}
\end{figure}

We prove the following.

\begin{theorem}[Hardy Inequality in the presence of a Schwarzschild Black Hole]\label{delta-hardy}
Let $n \geq 3$, and let $\psi: \mathcal{E} \rightarrow \mathbb{R}$ be locally absolutely continuous\footnote{This means: absolutely continuous when restricted to compact straight line segments. It is necessary and sufficient for integration by parts.} and satisfy the limit conditions $\psi(x)(|x|-1)^{-1/4} \rightarrow 0$ as $|x|\rightarrow 1$ and $\psi(x)|x|^{\frac{n-2}{2}} \rightarrow 0$ as $|x|\rightarrow \infty$. Then the inequality 
\begin{equation}\label{hardy-ineq-thm}
\int_\mathcal{E} |\grad \psi|^2 dv \geq \bigg( \frac{n-2}{2} \bigg)^2 \int_\mathcal{E} \frac{\psi^2}{\delta^2} dv
\end{equation}
is valid. Moreover, the constant $\big( \frac{n-2}{2} \big)^2$ is sharp.
\end{theorem}

\begin{proof}
Let $\psi = \psi(r,\omega)$ in polar coordinates, and let $f(r)=(n-2)^{-1}\delta(r)$ as before. We denote the first branch of $f$ by $f_+$ and the second branch by $f_-$. Then we have 
\begin{align*}
f_\pm'(r) - \frac{n-1}{r} f_\pm(r) = \pm \sqrt{\frac{r^{n-2}}{r^{n-2}-1}},
\end{align*}
and 
\begin{align*}
&\int_1^\infty \frac{|\psi(r,\omega)|^2}{f^2(r)} r^{n-1} \sqrt{\frac{r^{n-2}}{r^{n-2}-1}}dr \\
&\hspace{1cm} =\int_1^R \frac{|\psi(r,\omega)|^2}{f_+^2(r)} r^{n-1} \sqrt{\frac{r^{n-2}}{r^{n-2}-1}}dr + \int_R^\infty \frac{|\psi(r,\omega)|^2}{f_-^2(r)} r^{n-1} \sqrt{\frac{r^{n-2}}{r^{n-2}-1}}dr.
\end{align*}
By our assumptions, it follows that 
\begin{align*}
\frac{r^{n-1}}{f_\pm^2(r)} \sqrt{\frac{r^{n-2}}{r^{n-2}-1}} = \pm \frac{r^{n-1}}{f_\pm^2(r)} \bigg( f_\pm'(r) - \frac{n-1}{r} f_\pm(r) \bigg) = \mp \frac{\partial}{\partial r} \bigg( \frac{r^{n-1}}{f_\pm(r)} \bigg),
\end{align*}
and direct substitution yields 
\begin{align*}
&\int_1^\infty \frac{|\psi(r,\omega)|^2}{f^2(r)} r^{n-1} \sqrt{\frac{r^{n-2}}{r^{n-2}-1}}dr \\
&\hspace{1cm} = -\int_1^R |\psi(r,\omega)|^2 \frac{\partial}{\partial r} \bigg( \frac{r^{n-1}}{f_+(r)} \bigg)dr + \int_R^\infty |\psi(r,\omega)|^2 \frac{\partial}{\partial r} \bigg( \frac{r^{n-1}}{f_-(r)} \bigg)dr.
\end{align*}
Performing integration by parts and taking into account the limit conditions, this is equal to 
\begin{align*}
&-2|\psi(R,\omega)|^2\frac{R^{n-1}}{f(R)} + 2 \int_1^R \psi(r,\omega) \frac{\partial \psi}{\partial r}(r,\omega) \frac{r^{n-1}}{f(r)}dr \\
&\hspace{1cm }- 2 \int_R^\infty \psi(r,\omega) \frac{\partial \psi}{\partial r}(r,\omega) \frac{r^{n-1}}{f(r)}dr \leq 2 \int_1^\infty \frac{|\psi(r,\omega)|}{f(r)}\bigg| \frac{\partial \psi}{\partial r}(r,\omega) \bigg| r^{n-1}dr.
\end{align*}
Application of the Cauchy-Schwarz inequality then yields 
\begin{align*}
&\int_1^\infty \frac{|\psi(r,\omega)|^2}{f^2(r)} r^{n-1} \sqrt{\frac{r^{n-2}}{r^{n-2}-1}}dr \leq 2 \bigg( \int_1^\infty \frac{|\psi(r,\omega)|^2}{f^2(r)} r^{n-1} \sqrt{\frac{r^{n-2}}{r^{n-2}-1}}dr \bigg)^{1/2} \\
&\hspace{1cm}\times \bigg( \int_1^\infty \bigg| \frac{\partial \psi}{\partial r}(r,\omega) \bigg|^2 r^{n-1} \sqrt{\frac{r^{n-2}-1}{r^{n-2}}}dr\bigg)^{1/2}.
\end{align*}
To finish the proof of the inequality, it suffices to integrate both sides over $S^{n-1}$ and apply the Cauchy-Schwarz inequality one more time. This proves the inequality.

As for sharpness, arguing as in \cite{P} one can see that the one-dimensional inequality \eqref{1-dim} is sharp for $\kappa = \big( \frac{n-2}{2} \big)^2$, as it is essentially a suitable reparametrisation of the classic Hardy inequality with two endpoints. Then a minimising sequence of that inequality can be extended via spherical symmetry to a minimising sequence of \eqref{hardy-ineq-thm}.
\end{proof}

\paragraph{\textbf{Comparison with the Riemannian distance}} It turns out that the Riemannian distance $d$ and the function $\delta$ that appears in Theorem \ref{delta-hardy} have the same type of asymptotic behavior as $r \rightarrow 1$ and $r \rightarrow \infty$. This observation, combined with the fact that both functions are continuous implies that the quantity $d(r)/\delta(r)$ must be bounded in $(1,\infty)$ above and below by positive contants. Consequently, we get an estimate 
\begin{equation}\label{d-est}
\int_\mathcal{E} |\grad \psi|^2 dv \geq \bigg( \frac{n-2}{2} \bigg)^2 \int_\mathcal{E} \frac{\psi^2}{\delta^2} dv \geq \bigg( \frac{n-2}{2} \bigg)^2 \inf_{(1,\infty)} \bigg( \frac{d^2}{\delta^2} \bigg) \int_\mathcal{E} \frac{\psi^2}{d^2} dv.
\end{equation}
In particular, we prove the following.

\begin{corollary}[Existence of Riemannian Hardy constant]\label{d-hardy}
Let $n\geq 3$. There exists a constant $\kappa=\kappa(n)>0$ such that for all $\psi: \mathcal{E} \rightarrow \mathbb{R}$ as in Theorem \ref{delta-hardy} there holds 
\begin{equation}
\int_\mathcal{E} |\grad \psi|^2 dv \geq \kappa \int_\mathcal{E} \frac{\psi^2}{d^2} dv.
\end{equation}
In particular, we have the continuous embedding
\begin{equation}
H^1_0(\mathcal{E}) \hookrightarrow L^2(\mathcal{E},d^{-2}).
\end{equation}
\end{corollary}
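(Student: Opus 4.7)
The plan is to derive the corollary directly from Theorem \ref{delta-hardy} via the sandwich estimate \eqref{d-est}: the entire content is to show that $\inf_{(1,\infty)} d/\delta > 0$, after which the inequality is immediate, and the embedding follows by density.

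I would begin with an asymptotic analysis of the ratio $d/\delta$ at the two endpoints $r=1$ and $r=\infty$, using the expansions already recorded in Section 2. Near the event horizon the paper gives $d(r)\sim \tfrac{2}{\sqrt{n-2}}\sqrt{r-1}$; and since $r^{n-2}-1\sim (n-2)(r-1)$ as $r\to 1^+$, the first branch of $\delta$ satisfies $\delta(r)\sim 2\sqrt{(n-2)(r-1)}$, so
\[
\lim_{r\to 1^+} \frac{d(r)}{\delta(r)} = \frac{1}{n-2} > 0.
\]
At infinity, expanding $\sqrt{1-r^{-(n-2)}}$ to first order shows that the second branch of $\delta$ is asymptotic to $r$, while $d(r) = r + o(r)$, so $d(r)/\delta(r) \to 1$ as $r\to\infty$. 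Since $d$ and $\delta$ are continuous and strictly positive on $(1,\infty)$ (with $r=R$ only producing a corner in $\delta$, not a zero), the ratio extends to a positive continuous function on the compactified interval $[1,\infty]$, and therefore attains a strictly positive minimum
\[
c := \inf_{r\in(1,\infty)} \frac{d(r)}{\delta(r)} > 0.
\]

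Given this bound, the pointwise estimate $\delta^{-2} \geq c^2 d^{-2}$ combined with Theorem \ref{delta-hardy} yields
\[
\int_{\mathcal{E}} |\grad \psi|^2 \, dv \;\geq\; \left( \frac{n-2}{2} \right)^2 \int_{\mathcal{E}} \frac{\psi^2}{\delta^2} \, dv \;\geq\; \kappa \int_{\mathcal{E}} \frac{\psi^2}{d^2} \, dv
\]
for all $\psi$ admissible in Theorem \ref{delta-hardy}, with $\kappa := c^2 \bigl(\tfrac{n-2}{2}\bigr)^2$. This covers in particular $C_c^\infty(\mathcal{E})$, whose elements trivially satisfy the limit conditions. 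For the embedding $H^1_0(\mathcal{E}) \hookrightarrow L^2(\mathcal{E}, d^{-2})$, I would take $u \in H^1_0(\mathcal{E})$ and a sequence $\psi_k \in C_c^\infty(\mathcal{E})$ with $\psi_k \to u$ in $H^1$, apply the inequality to the Cauchy differences $\psi_k-\psi_j$ to conclude that $\{\psi_k\}$ is Cauchy in $L^2(\mathcal{E}, d^{-2})$, and identify the limit with $u$ via Fatou on a subsequence converging a.e.

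The only step requiring genuine care is the asymptotic matching at $r=1$, where one must cancel square-root singularities consistently in $d$ and in $\delta$; everything else, including the density argument, is routine. I do not expect a significant obstacle.
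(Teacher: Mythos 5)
Your proposal is correct and follows essentially the same route as the paper: reduce to showing $\inf_{(1,\infty)} d/\delta>0$, verify the limits $1/(n-2)$ at $r\to 1^+$ and $1$ at $r\to\infty$ via the same asymptotic expansions, and invoke continuity and positivity on the interior. The only addition is your explicit density/Fatou argument for the embedding, which the paper leaves implicit; your asymptotic computations match the paper's exactly.
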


\begin{proof}
In view of \eqref{d-est}, it suffices to show that $\inf (d/\delta) >0$. The function $d/\delta$ is continuous and positive in $(1,\infty)$, so it remains to check the limits as $r\rightarrow 1$ and $r\rightarrow \infty$. By elementary methods, one can show that
\begin{equation}
    d(r) = 
    \left\{
        \begin{array}{ll}
            \frac{2}{\sqrt{n-2}}\sqrt{r-1} +o(\sqrt{r-1}) & \text{ as } r\rightarrow 1 \\
            r+o(r) & \text{ as } r\rightarrow\infty
        \end{array}
    \right.
\end{equation}
while
\begin{equation}
    \delta(r) = 
    \left\{
        \begin{array}{ll}
            2\sqrt{n-2}\sqrt{r-1} +o(\sqrt{r-1}) & \text{ as } r\rightarrow 1 \\
            r+o(r) & \text{ as } r\rightarrow\infty
        \end{array}
    \right.
\end{equation}
This proves that 
\begin{align*}
\lim_{r\rightarrow 1} d/\delta = \frac{1}{n-2},\quad \lim_{r\rightarrow \infty} d/\delta = 1,
\end{align*}
which are both positive. This completes the proof.
\end{proof}

\begin{remark}
Our method is sufficient to demonstrate the existence of a positive Hardy constant, and also provides a way - at least in principle - to calculate lower bounds for the best constant, provided that one can find the minimum of $d/\delta$. The extremal analysis of $d/\delta$ proves to be quite a difficult task, however, and will not be pursued further in the present work. One may convince oneself, at least in the case $n=3$, by looking at a plot of $d/\delta$, that the minimum is achieved at $r=R=4/3$, which would imply that a lower bound for the best constant in this case is
\begin{equation}
\kappa(3)\geq \bigg( \frac{9}{32}\bigg( \frac{2}{3} + \frac{\log 3}{2} \bigg)\bigg)^2 \simeq 0.117,
\end{equation}
which is significantly smaller, although of the same order of magnitude, than the corresponding Euclidean value $1/4$. While this may (or may not) be the value of the best constant, its exact determination cannot be achieved in this manner and is, therefore, left as an open problem.
\end{remark}

\section{Heisenberg inequality}

An inequality that is closely related to the Hardy inequality is the Heisenberg inequality, better known as Heisenberg's uncertainty principle. In Euclidean space it takes the form 
\begin{equation}
\frac{n}{2} \int_{\mathbb{R}^n} |\psi(x)|^2 dx \leq \bigg( \int_{\mathbb{R}^n} |x|^2 |\psi(x)|^2 dx \bigg)^{1/2} \bigg( \int_{\mathbb{R}^n} |\grad \psi(x)|^2 dx \bigg)^{1/2}
\end{equation}
for $\psi \in C^1_c(\mathbb{R}^n)$. There have also been some generalisations in the Riemannian setting; most notably, see \cite{K}.

Here we prove an analogue for Schwarzchild geometry which involves an induced ``distance'' $s:\mathcal{E} \rightarrow \mathbb{R}$ from the event horizon, given by 
\begin{equation}
s(r)=\frac{1}{r^{n-1}} \int_1^r \sqrt{\frac{\xi^{n-2}}{\xi^{n-2}-1}} \xi^{n-1} d\xi.
\end{equation}

Before we state and prove the inequality, it is worthwhile to comment on some of the properties of $s$. First, $s(r)$ is a solution of the initial value problem 
\begin{equation}
s'(r)+\frac{n-1}{r} s(r) = \sqrt{\frac{r^{n-2}}{r^{n-2}-1}},\ \ \ s(1)=0
\end{equation}
for $1 \leq r < \infty$. Similar to the Riemannian distance $d$, $s$ has asymptotic behavior 
\begin{equation}
    s(r) = 
    \left\{
        \begin{array}{ll}
            \frac{2}{\sqrt{n-2}}\sqrt{r-1} +o(\sqrt{r-1}) & \text{ as } r\rightarrow 1 \\
            r/n+o(r) & \text{ as } r\rightarrow\infty
        \end{array}
    \right. ,
\end{equation}
which is the same for small $r$ and scaled by a factor of $1/n$ for large $r$. The induced distance can be also calculated explicitly for $n=3,4$. Specifically, we have 
\begin{align*}
&s=\frac{1}{24} \sqrt{r-1} \sqrt{r} \bigg( 8+\frac{10}{r} +\frac{15}{r^2} \bigg) + \frac{5}{8r^2} \log (\sqrt{r}+\sqrt{r-1})\quad \textrm{for } n=3, \\
&s=\frac{\sqrt{r^2-1}}{8} \bigg( 2+\frac{2}{r^2} \bigg)+\frac{3}{8r^3} \log (r+\sqrt{r^2-1})\quad \textrm{for } n=4.
\end{align*}
In Figure 3, we give a plot of $s$ for the first three dimensions of interest.

\begin{figure}
\centering
\includegraphics[scale=0.8]{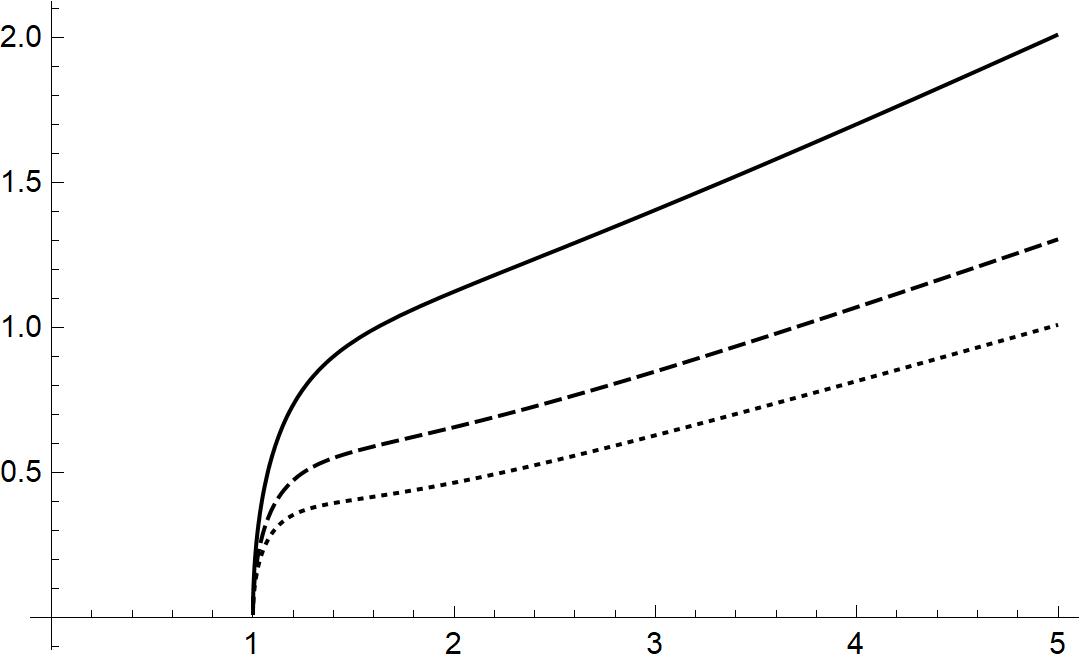}
\caption{The induced distance $s$ of a point from the event horizon as a function of $r$ for dimensions $n=3,4,5$.}
\end{figure}

\begin{theorem}[Heisenberg inequality in the presence of a Schwarzschild Black Hole]\label{Heisenberg}
Let $\psi:\mathcal{E} \rightarrow \mathbb{R}$ be locally absolutely continuous and satisfy the limit condition $\psi^2(x) |x|^n \rightarrow 0$ as $|x| \rightarrow \infty$. Then 
\begin{equation}
\frac{1}{2} \int_\mathcal{E} |\psi|^2 dv \leq \bigg( \int_\mathcal{E} s^2 |\psi|^2 dv \bigg)^{1/2} \bigg( \int_\mathcal{E} |\grad \psi|^2 dv \bigg)^{1/2}.
\end{equation}
The constant $1/2$ is sharp and attainable in that class of functions by any radial function of the form 
\begin{equation}
A\exp \bigg( -B\int_1^r s(\xi) \sqrt{\frac{\xi^{n-2}}{\xi^{n-2}-1}} d\xi \bigg),\quad A\in\R,B>0.
\end{equation}
\end{theorem}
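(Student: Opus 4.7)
The plan is to reduce to the radial case, exploit the differential identity for $s$ that was flagged right before the statement, integrate by parts, and close with a Cauchy--Schwarz step whose equality case produces precisely the claimed extremals.

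First I would observe that the ODE
\[
s'(r) + \frac{n-1}{r} s(r) = \sqrt{\frac{r^{n-2}}{r^{n-2}-1}},\qquad s(1)=0,
\]
is equivalent to the much more usable identity
\[
\frac{d}{dr}\bigl(r^{n-1} s(r)\bigr) = r^{n-1}\sqrt{\frac{r^{n-2}}{r^{n-2}-1}}.
\]
The right-hand side is exactly the radial weight appearing in $\int_\mathcal{E}|\psi|^2 \, dv$ in polar coordinates, which is the whole point of defining $s$ this way.

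Next, for $\psi=\psi(r,\omega)$ I would integrate by parts in $r$:
\[
\int_1^\infty |\psi|^2 r^{n-1}\sqrt{\frac{r^{n-2}}{r^{n-2}-1}}\,dr = \bigl[|\psi|^2 r^{n-1} s\bigr]_1^\infty - 2\int_1^\infty \psi\,\partial_r\psi\, r^{n-1} s\, dr.
\]
The boundary term at $r=1$ vanishes because $s(1)=0$ (so the mild local absolute continuity suffices — no lower-end growth hypothesis is needed, which is why none is imposed in the statement), while the one at $r=\infty$ vanishes thanks to the hypothesis $\psi^2(x)|x|^n\to 0$ together with the asymptotic $s(r)\sim r/n$. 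Taking absolute values and splitting the weight $r^{n-1}s(r)$ symmetrically as
\[
r^{n-1} s = \Bigl(s\,r^{(n-1)/2}\bigl(\tfrac{r^{n-2}}{r^{n-2}-1}\bigr)^{1/4}\Bigr)\cdot\Bigl(r^{(n-1)/2}\bigl(\tfrac{r^{n-2}-1}{r^{n-2}}\bigr)^{1/4}\Bigr),
\]
Cauchy--Schwarz in $dr$ yields
\[
\int_1^\infty |\psi|^2 r^{n-1}\sqrt{\tfrac{r^{n-2}}{r^{n-2}-1}}dr \le 2\Bigl(\int s^2|\psi|^2 r^{n-1}\sqrt{\tfrac{r^{n-2}}{r^{n-2}-1}}dr\Bigr)^{1/2}\Bigl(\int |\partial_r\psi|^2 r^{n-1}\sqrt{\tfrac{r^{n-2}-1}{r^{n-2}}}dr\Bigr)^{1/2}.
\]
Integrating over $S^{n-1}$ and applying Cauchy--Schwarz once more upgrades this to the same bound for general $\psi$, with $|\partial_r\psi|^2$ replaced on the right by the full Riemannian $|\grad\psi|^2$, since $|\grad\psi|^2 \ge \frac{r^{n-2}-1}{r^{n-2}}|\partial_r\psi|^2$ is exactly the factor needed to absorb the Schwarzschild radial weight into the Euclidean $r^{n-1}$ measure on $S^{n-1}$.

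For sharpness, I would trace back the two inequality steps. The integration-by-parts step is saturated when $\psi\,\partial_r\psi$ has a definite sign (say $\partial_r\psi \le 0$), and Cauchy--Schwarz is saturated precisely when
\[
\partial_r\psi = -B\,\psi\,s(r)\sqrt{\tfrac{r^{n-2}}{r^{n-2}-1}}
\]
for some $B>0$, whose general solution is the family $A\exp\bigl(-B\int_1^r s(\xi)\sqrt{\xi^{n-2}/(\xi^{n-2}-1)}\,d\xi\bigr)$ listed in the statement; one only has to check that these functions actually lie in the admissible class, which follows from the asymptotics $s(r)\sim r/n$ making the exponent grow like $r^2$ and crushing $\psi^2 r^n$ at infinity. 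I do not expect any real obstacle — the construction of $s$ was tailored so that the integration by parts works cleanly; the only mildly delicate point is confirming the extremals are genuinely admissible at both ends, and that is a direct asymptotic check.
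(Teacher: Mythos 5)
Your proposal follows essentially the same route as the paper: the identity $(r^{n-1}s)'=r^{n-1}\sqrt{r^{n-2}/(r^{n-2}-1)}$, integration by parts with the boundary terms killed by $s(1)=0$ and the decay hypothesis, two applications of Cauchy--Schwarz, and the extremals read off from the equality cases (sign condition plus proportionality), including the admissibility check via $s(r)\sim r/n$. No substantive differences; the argument is correct.
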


\begin{proof}
By integration by parts and the Cauchy-Schwarz inequality we have 
\begin{align*}
&\int_1^\infty |\psi(r,\omega)|^2 r^{n-1} \sqrt{\frac{r^{n-2}}{r^{n-2}-1}} dr \\
&\hspace{1cm}= \int_1^\infty |\psi(r,\omega)|^2 (s(r)r^{n-1})'dr \\
&\hspace{2cm}= [|\psi(r,\omega)|^2 s(r)r^{n-1} ]_1^\infty -2\int_1^\infty \psi(r,\omega) \frac{\partial \psi}{\partial r}(r,\omega) s(r) r^{n-1} dr \\
&\hspace{3cm}\leq 2 \bigg( \int_1^\infty s(r)^2 |\psi(r,\omega)|^2 r^{n-1} \sqrt{\frac{r^{n-2}}{r^{n-2}-1}} dr \bigg)^{1/2} \\
&\hspace{4cm}\times \bigg( \int_1^r \big| \frac{\partial \psi}{\partial r}(r,\omega) \big|^2 r^{n-1} \sqrt{\frac{r^{n-2}-1}{r^{n-2}}} dr \bigg)^{1/2}.
\end{align*}
Integrating over $S^{n-1}$ and applying the Cauchy-Schwarz inequality one more time completes the proof of the inequality.

\medskip

\paragraph{\textit{Minimisers.}} For a radial function $\psi=\psi(r)$ to be a minimiser, there are three necessary and sufficient conditions, all arising by the requirement that we have equality in each step of the above calculations. First, in order to eliminate the boundary term, it must be that 
\begin{equation}\label{limit-condition-1}
\lim_{r \rightarrow 1} |\psi(r)|^2 s(r) r^{n-1} = 0.
\end{equation}

Second, in order to have equality in
\begin{align*}
-2\int_1^\infty \psi(r) \psi'(r) s(r) r^{n-1} dr \leq 2 \int_1^\infty |\psi(r)| | \psi'(r) | s(r) r^{n-1} dr,
\end{align*}
we must have the sign condition $\psi(r)\psi'(r)\leq 0$.

Finally, to have equality in the Cauchy-Schwarz inequality, there must be a constant $B\in\R$ such that 
\begin{align*}
\psi'(r)=B\psi(r)s(r)\sqrt{\frac{r^{n-2}}{r^{n-2}-1}},
\end{align*}
and thus 
\begin{align*}
(\log|\psi(r)|)'=B s(r) \sqrt{\frac{r^{n-2}}{r^{n-2}-1}}.
\end{align*}
Integrating both sides from $1$ to $r$ yields 
\begin{align*}
\psi(r)=A\exp \bigg( B\int_1^r s(\xi) \sqrt{\frac{\xi^{n-2}}{\xi^{n-2}-1}} d\xi \bigg),
\end{align*}
where $A = \psi(1) \in\R$. The limit condition \eqref{limit-condition-1} is readily satisfied. The limit condition at infinity holds if and only if $B<0$, in which case the sign condition is also satisfied. It follows that functions of the form 
\begin{align*}
A\exp \bigg( -B\int_1^r s(\xi) \sqrt{\frac{\xi^{n-2}}{\xi^{n-2}-1}} d\xi \bigg),\quad A\in\R, B>0
\end{align*}
are indeed the minimisers of the inequality. This completes the proof.
\end{proof}

\begin{remark}
Note that, in contrast to the Euclidean case, the best constant here is $1/2$ and not $n/2$. However, the inequality is directly comparable to the Euclidean one: unless $r$ is comparable to $1$, the induced distance $s(r)$ quickly assumes the linear behaviour $s(r) \approx r/n$, which accounts for the missing $n$ in the constant. In other words, if one is not close to the event horizon, the inequality is essentially the same as the Euclidean one. Since the Schwarzschild metric is asymptotically flat, this is to be expected anyway. However, Figure 3 suggests that one need not go very far for this to happen. For $r\geq 2$ the approximation $s(r)\approx r/n$ seems to be quite good.
\end{remark}

\medskip

\paragraph{\textbf{Comparison with the Riemannian distance}} It is easy to see from the definitions that $s(r)\leq d(r)$, which immediately proves the following related Heisenberg inequality.

\begin{corollary}
Let $\psi:\mathcal{E} \rightarrow \mathbb{R}$ be as in Theorem \ref{Heisenberg}. Then 
\begin{equation}
\frac{1}{2} \int_\mathcal{E} |\psi|^2 dv \leq \bigg( \int_\mathcal{E} d^2 |\psi|^2 dv \bigg)^{1/2} \bigg( \int_\mathcal{E} |\grad \psi|^2 dv \bigg)^{1/2}.
\end{equation}
\end{corollary}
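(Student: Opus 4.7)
The plan is to reduce this corollary to Theorem~\ref{Heisenberg} by establishing the pointwise comparison $s(r) \leq d(r)$ for $r > 1$, after which the result follows by monotonicity of the integral on the right-hand side.

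First, I would compare the two integral representations
\begin{align*}
d(r) = \int_1^r \sqrt{\frac{\xi^{n-2}}{\xi^{n-2}-1}}\, d\xi, \qquad s(r) = \frac{1}{r^{n-1}} \int_1^r \sqrt{\frac{\xi^{n-2}}{\xi^{n-2}-1}}\, \xi^{n-1}\, d\xi.
\end{align*}
Pulling $r^{n-1}$ inside the integral, $s(r) = \int_1^r \sqrt{\xi^{n-2}/(\xi^{n-2}-1)}\,(\xi/r)^{n-1}\,d\xi$. Since $\xi \in [1,r]$ implies $(\xi/r)^{n-1}\leq 1$, the integrand is pointwise bounded above by the integrand defining $d(r)$, so $s(r) \leq d(r)$ for all $r > 1$.

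Next, since $s$ and $d$ are nonnegative radial functions and $|\psi|^2 \geq 0$, the pointwise estimate $s^2 \leq d^2$ on $\mathcal{E}$ gives
\begin{equation*}
\int_\mathcal{E} s^2 |\psi|^2\, dv \leq \int_\mathcal{E} d^2 |\psi|^2\, dv.
\end{equation*}
Taking square roots and substituting into the right-hand side of the inequality from Theorem~\ref{Heisenberg} yields the claimed bound, since the hypotheses on $\psi$ are identical and hence Theorem~\ref{Heisenberg} applies directly.

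There is no substantial obstacle here; the only thing to verify is the pointwise inequality $s \leq d$, which reduces to the trivial observation $(\xi/r)^{n-1}\leq 1$ on $[1,r]$. Note that the sharp constant $1/2$ is inherited, but equality in the corollary can generally not be attained (since one would need $s = d$ on the support of a minimiser, which fails away from the event horizon). This does not affect the validity of the inequality itself.
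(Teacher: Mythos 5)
Your proposal is correct and follows exactly the paper's route: the paper also deduces the corollary from Theorem~\ref{Heisenberg} via the pointwise bound $s(r)\leq d(r)$, which it states as ``easy to see from the definitions'' and which you verify by the observation $(\xi/r)^{n-1}\leq 1$ on $[1,r]$. Your added remark that equality is generally not attainable in the corollary is a sensible extra observation, not present in the paper.
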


\begin{remark}
Since $s=d/n+o(d)$ as $d\rightarrow\infty$, it may well be the case that the constant $1/2$ is \textit{not} optimal here. This cannot be proven or disproven with the method we have developed so far, and is again left as an open problem.

In all, our method, in both the Hardy and the Heisenberg inequalities, has exploited the particular features of the weights that appear due to the geometry to derive potentials that particularly fit these weights so that no information is ``lost'' in the process. In this way we have established sharp inequalities for those weights. However, information may be lost when one crudely compares these weights with the Riemannian distance using $L^\infty$ estimates, and it appears that different methods are required to treat the optimality problem in those cases.
\end{remark}

\medskip

\paragraph{\textbf{Acknowledgment}} The author would like to thank prof. G. Barbatis for reviewing the article and making useful suggestions.

\bibliographystyle{amsplain}

\end{document}